\documentclass[a4paper,leqno]{amsart}
\textwidth=5.4in \textheight=8in

\usepackage[utf8]{inputenc}
\usepackage[T1]{fontenc}

\usepackage{enumerate,enumitem}
\usepackage{lscape}
\usepackage[usenames,dvipsnames]{xcolor}
\usepackage{mathabx,amsmath,amssymb,stmaryrd,mathtools} 
\usepackage{pstricks,pst-node,pst-coil,pst-plot}

\numberwithin{equation}{section}

\usepackage[colorlinks=true, bookmarks=true, pdfstartview=FitH, pagebackref=true]{hyperref}
\usepackage[space, compress, sort]{cite}
\usepackage{verbatim}

\theoremstyle{plain}

\newtheorem{thm}{Theorem}[section]
\newtheorem{theorem}[thm]{Theorem}

\newtheorem{lemma}[thm]{Lemma}

\newtheorem{proposition}[thm]{Proposition}

\theoremstyle{remark}

\theoremstyle{definition}

\newcommand{\bE}{\mathbb{E}}
\newcommand{\bR}{\mathbb{R}}
\newcommand{\bS}{\mathbb{S}}

\newcommand{\omegatil}{\widetilde{\omega}}
\newcommand{\ftil}{\widetilde{f}}

\DeclareMathOperator{\diam}{diam}
\DeclareMathOperator{\conv}{conv}
\DeclareMathOperator{\vol}{Vol}


\date{\today \ at \currenttime}

\begin{document}


\author[R. Gicquaud]{Romain Gicquaud}
\address[R. Gicquaud]{Institut Denis Poisson \\ UFR Sciences et Technologie \\
    Facult\'e de Tours \\ Parc de Grandmont\\ 37200 Tours \\ FRANCE}
\email{\href{mailto: R. Gicquaud <Romain.Gicquaud@idpoisson.fr>}{romain.gicquaud@idpoisson.fr}}

\title[Poincaré-Sobolev inequalities]{A note on Poincaré-Sobolev type inequalities on compact manifolds}
\keywords{Poincaré inequalities, Sobolev inequalities, weighted averages, explicit constants, unweighted Sobolev norms, compact Riemannian manifolds}
\subjclass[2020]{46E35, 58J05, 35A23}

\begin{abstract}
    We prove a Poincaré–Sobolev type inequality on compact Riemannian manifolds where the deviation of a function from a biased average, defined using a density $\omega$, is controlled by the unweighted $L^p$-norm of its gradient. Unlike classical weighted Poincaré inequalities, the density does not enter the measure or the Sobolev norms, but only the reference average. We show that the associated Poincaré constant depends quantitatively on $\|\omega\|_{L^q}$. This framework naturally arises in the analysis of coupled elliptic systems and seems not to have been addressed in the existing literature.
\end{abstract}

\date{\today}
\maketitle

\tableofcontents

\section{Introduction}
Inequalities of Poincaré and Sobolev type are among the most fundamental tools
in the analysis of partial differential equations. They provide quantitative
control of the oscillation of a function in terms of its derivatives and lie at
the heart of \emph{a priori} estimates, compactness arguments, and regularity
theory for elliptic and parabolic equations.

In its simplest form, the Poincaré inequality asserts that, on a compact
Riemannian manifold $(M,g)$, the deviation of a function from its mean value is
controlled by the $L^p$-norm of its gradient: for any $p \in [1,\infty)$, there
exists a constant $C = C(M,g,p)$ such that, for any function $f \in
    W^{1,p}(M,\bR)$,
\[
    \left\| f - \bE[f] \right\|_{L^p}
    \leq C \|df\|_{L^p},
\]
where $\bE[f]$ denotes the average value of $f$,
\[
    \bE[f]
    = \frac{1}{\operatorname{vol}(M,g)} \int_M f\,d\mu^g.
\]
We refer the reader to~\cite[Section~5.8]{Evans} for a proof of this inequality
on bounded domains of $\bR^n$ and to~\cite[Lemma~3.8]{Hebey} for compact
manifolds. More refined versions, based on the Sobolev inequality, allow one to
estimate this deviation in stronger Lebesgue norms and are indispensable in the
study of nonlinear elliptic equations.

In many applications, however, the reference average is not the uniform mean
with respect to the Riemannian volume measure. Instead, one is led to consider
averages defined using a density $\omega$,
\[
    \bE_\omega[f] = \int_M f\,\omega\,d\mu^g,
\]
where $\omega$ is a non-negative function normalized by $\int_M \omega\,d\mu^g
    = 1$. Such weighted averages arise naturally in problems involving
normalization constraints, conservation laws, or coupled systems of equations,
where the density $\omega$ may depend on other unknowns of the system.

For a fixed density $\omega$ and $p \in (1,\infty)$, it is not difficult to
prove a Poincaré-type inequality controlling $f - \bE_\omega[f]$ by
$\|df\|_{L^p}$. A standard compactness argument analogous
to~\cite[Lemma~3.8]{Hebey} shows that there exists a constant $C =
    C(M,g,p,\omega)$ such that
\[
    \|f - \bE_\omega[f]\|_{L^p}
    \leq C \|df\|_{L^p}
    \qquad \text{for all } f \in W^{1,p}(M,\bR).
\]
However, this argument provides no information on how the constant depends on
$\omega$, and in particular does not allow one to obtain uniform estimates when
the density varies in a family of weights.

Understanding this dependence is a subtle issue. To our knowledge, the only
case in which such an estimate is explicitly addressed in the literature is
when $\omega$ is the characteristic function of a measurable subset of a
bounded convex domain, see~\cite[Lemma 7.16]{GilbargTrudinger}, a situation
related to nonlinear capacity theory (see, for instance,~\cite{MazyaSobolev}).
Another source of interest in Poincaré-type inequalities involving densities
comes from the theory of Bakry--Émery curvature-dimension conditions (see,
e.g.,~\cite{LiWang}); in that framework, however, the density defines the
underlying measure itself and modifies the geometric structure of the space, so
that the resulting inequalities belong to a fundamentally different setting.

The aim of this paper is to establish a Poincaré--Sobolev type inequality in
which the deviation of a function from a weighted average is controlled by the
unweighted Sobolev norm of its gradient, with an explicit quantitative
dependence of the constant on the density. More precisely, we prove the
following result.

\begin{theorem}\label{thmPoincare}
    Let $(M,g)$ be a compact Riemannian manifold of dimension $n$, and let
    $p,q \in (1,\infty)$ satisfy
    \[
        p \geq \frac{n}{n-1}
        \quad \text{and} \quad
        q > \frac{n}{2}.
    \]
    Let $r \in (1,\infty]$, and assume that, if $p < n$, then
    \[
        \frac{1}{r} \geq \frac{1}{p} - \frac{1}{n}.
    \]
    Then, for all functions $f \in W^{1,p}(M,\bR)$ and all non-negative densities
    $\omega \in L^q(M)$ normalized by $\bE[\omega] = 1$, the weighted average
    $\bE_\omega[f]$ is well-defined and there exists a constant $C>0$, depending
    only on $(M,g,p,q,r)$, such that
    \[
        \left\| f - \bE_\omega[f] \right\|_{L^r}
        \leq
        C\,\|\omega\|_{L^q}^{\frac{n}{(n-1)p}}\,\|df\|_{L^p}.
    \]
\end{theorem}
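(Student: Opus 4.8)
The strategy is to compare the weighted average $\bE_\omega[f]$ with the ordinary mean $\bE[f]$; write $p'$ and $q'$ for the Hölder conjugates of $p$ and $q$. Since $\bE[\omega]=1$ one has the identity
\[
    \bE_\omega[f]-\bE[f]=\bE\bigl[(f-\bE[f])(\omega-1)\bigr],
\]
and because $\omega-1$ has zero mean it can be written as $-\Delta h$ for a unique $h$ with $\bE[h]=0$. Integrating by parts,
\[
    \bE_\omega[f]-\bE[f]=\bE\bigl[\langle df,dh\rangle\bigr],
    \qquad
    \bigl|\bE_\omega[f]-\bE[f]\bigr|\le C\,\|df\|_{L^p}\,\|dh\|_{L^{p'}} .
\]
Writing $f-\bE_\omega[f]=(f-\bE[f])-(\bE_\omega[f]-\bE[f])$ and noting that the last term is a constant,
\[
    \|f-\bE_\omega[f]\|_{L^r}\le\|f-\bE[f]\|_{L^r}+C\,\bigl|\bE_\omega[f]-\bE[f]\bigr|,
\]
where $\|f-\bE[f]\|_{L^r}\le C\|df\|_{L^p}$ by the usual Poincaré--Sobolev inequality (this is where the condition $\tfrac1r\ge\tfrac1p-\tfrac1n$ for $p<n$ is used). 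Finally, $\bE[\omega]=1$ and $q>1$ force $\|\omega\|_{L^q}\ge c_0>0$ by Jensen's inequality, so $\|df\|_{L^p}\le C\,\|\omega\|_{L^q}^{n/((n-1)p)}\|df\|_{L^p}$; hence the theorem reduces to the single estimate $\|dh\|_{L^{p'}}\le C\,\|\omega\|_{L^q}^{n/((n-1)p)}$. (The hypotheses also make $\bE_\omega[f]$ well defined: $q>\tfrac n2$ gives $q'<\tfrac n{n-2}\le p^\ast$ when $p<n$, from $p\ge\tfrac n{n-1}$, so $f\in L^{q'}$ and $f\omega\in L^1$; when $p\ge n$ this is immediate.)

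To prove the reduced bound I would interpolate the map $\psi\mapsto d(\Delta^{-1}\psi)$ on mean-zero functions between two endpoints. On a compact manifold its Schwartz kernel $\nabla_xG(x,y)$ satisfies $|\nabla_xG(x,y)|\le C\,d(x,y)^{1-n}$; this kernel is integrable in each variable, which gives $\|dh\|_{L^1}\le C\|\omega-1\|_{L^1}$, and it is of fractional-integration (Riesz potential of order $1$) type, which gives $\|dh\|_{L^s}\le C\|\omega-1\|_{L^q}$ with $\tfrac1s=\tfrac1q-\tfrac1n$ if $q<n$, and with $s$ any fixed large finite exponent if $q\ge n$. In all cases $q>\tfrac n2$ forces $s\ge n$, and $p\ge\tfrac n{n-1}$ forces $1\le p'\le n$; thus $1\le p'\le s$ and the interpolation inequality for Lebesgue norms gives
\[
    \|dh\|_{L^{p'}}\le\|dh\|_{L^1}^{1-\theta}\,\|dh\|_{L^s}^{\theta}\le C\,\|\omega-1\|_{L^1}^{1-\theta}\,\|\omega-1\|_{L^q}^{\theta},
\]
where $\theta=(1-1/p')/(1-1/s)=(1/p)/(1-1/s)\le(1/p)/(1-1/n)=n/((n-1)p)$, the extreme value being attained as $q\to\tfrac n2$. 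This is precisely the source of the exponent in the statement.

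It remains to trade the suboptimal exponent $\theta$ for the target $\theta_0:=\tfrac n{(n-1)p}$ and to clear the $\|\omega-1\|_{L^1}$ factor, both of which are soft. On a compact manifold $\|\omega-1\|_{L^1}\le C\|\omega-1\|_{L^q}$, so inserting $(\|\omega-1\|_{L^1}/\|\omega-1\|_{L^q})^{\theta_0-\theta}\le C$ and using that $\|\omega-1\|_{L^1}\le C$ is a fixed bound (because $\|\omega\|_{L^1}$ is pinned by $\bE[\omega]=1$) together with $\|\omega-1\|_{L^q}\le 2\|\omega\|_{L^q}$ (from $\|\omega\|_{L^q}\ge c_0$), the estimate becomes $\|dh\|_{L^{p'}}\le C\|\omega\|_{L^q}^{\theta_0}$, which closes the argument.

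I expect the main obstacle to be isolating the two endpoint estimates for $\psi\mapsto d(\Delta^{-1}\psi)$ with the sharp $q$-dependent exponent $s$, and recognizing that the interpolation exponent required is the worst-case value $\theta_0$ reached as $q\to\tfrac n2$ — together with checking that the hypotheses $p\ge\tfrac n{n-1}$ and $q>\tfrac n2$ are exactly what guarantee $1\le p'\le n\le s$ and $\theta_0\le 1$. A secondary technical point is justifying the integration by parts for $f\in W^{1,p}$, with $p$ possibly as small as $\tfrac n{n-1}$, and $h\in W^{2,q}$: this is done by density, using $W^{1,p}\hookrightarrow L^{q'}$ to pass to the limit on the left and $dh\in L^{p'}$ on the right. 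Tracking the constants, all of which depend only on $(M,g,p,q,r)$, is then routine.
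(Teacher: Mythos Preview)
Your argument is correct and takes a genuinely different route from the paper. The paper first proves the inequality on bounded convex domains in $\bR^n$ by a direct integral-representation argument (writing $f(y)-f(x)$ as a line integral, averaging against $\omega(y)\,dy$, and recognizing a truncated Riesz potential acting on $\omega$), then upgrades the exponent via Sobolev embedding, and finally transfers everything to $(M,g)$ by constructing a surjective local diffeomorphism $\Psi:B\to M$ from the unit ball and pulling back $f$ and $\omega$ through $\Psi$ with the coarea formula. Your approach is intrinsic to the manifold: you split $f-\bE_\omega[f]=(f-\bE[f])-(\bE_\omega[f]-\bE[f])$, handle the first piece with the ordinary Poincar\'e--Sobolev inequality, and rewrite the constant $\bE_\omega[f]-\bE[f]$ as $\bE[\langle df,dh\rangle]$ with $-\Delta h=\omega-1$, reducing the theorem to a gradient bound on the Newtonian potential of $\omega-1$; this you obtain from the pointwise estimate $|\nabla_xG(x,y)|\lesssim d(x,y)^{1-n}$ via Schur's test at $L^1$ and Hardy--Littlewood--Sobolev at $L^q$, then interpolate. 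What the paper's approach buys is that it is essentially elementary (no elliptic theory or Green's function asymptotics are invoked) and yields the Euclidean lemmas as standalone results, at the cost of the somewhat heavy diffeomorphism construction in Section~\ref{secDiffeo}. Your approach buys directness and makes transparent why the exponent $n/((n-1)p)$ appears: it is exactly the worst-case interpolation parameter as $q\downarrow n/2$, equivalently the condition $p'\le n$ coming from $p\ge n/(n-1)$. One small remark: your ``soft'' step trading $\theta$ for $\theta_0$ can be said more simply---since $\theta<\theta_0$ and $\|\omega\|_{L^q}\ge c_0>0$, one has $\|\omega\|_{L^q}^{\theta}\le c_0^{\theta-\theta_0}\|\omega\|_{L^q}^{\theta_0}$ directly.
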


To the best of our knowledge, such a quantitative estimate has not previously
been established, even in the Euclidean setting.

The outline of the paper is as follows. In Section~\ref{secRn}, we address the
case of bounded convex open subsets of $\bR^n$. In Section~\ref{secDiffeo}, we
construct a local diffeomorphism $\Psi: B \to M$ from the open unit ball $B$ in
$\bR^n$ onto the manifold $M$. Finally, in Section~\ref{secProof}, we use the
coarea formula to lift the function and $\omega$ from $M$ to $B$ to prove our
main result.

\subsection*{Acknowledgments}

I am grateful to Laurent Véron for useful discussion about the paper.

\section{The case of open subsets of \texorpdfstring{$\bR^n$}{TEXT}}\label{secRn}
We start with the fundamental case of the inequality. The construction is
inspired by~\cite[Lemma 7.16]{GilbargTrudinger}:

\begin{lemma}\label{lmPoincare1}
    Let $U$ be an open non-empty bounded convex subset of $\bR^n$ with smooth boundary. Let $p, q \in (1, \infty)$ be such that $p \geq \frac{n}{n-1}$ and $q > \frac{n}{2}$. There exists a constant $c = c(U, p, q)$ such that, for any $\omega \in L^q(U, \bR)$, $\omega \geq 0$ a.e., $\bE[\omega] = 1$, and any function $f \in W^{1, p}(U, \bR)$,
    \[
        \int_U \left|f(x) - \bE_\omega[f]\right|^t dx \leq c \|\omega\|_{L^q} \left(\int_U |df|^p dx\right)^{\frac{t}{p}},
    \]
    where $t = \frac{n-1}{n}p$.
\end{lemma}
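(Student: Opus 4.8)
The plan is to compare $f$ with its unweighted mean $\bar f:=\bE[f]=\frac1{|U|}\int_U f\,dx$ and to handle the difference of the two averages by applying Jensen's inequality \emph{before} Hölder's: this ordering is precisely what produces the single power of $\|\omega\|_{L^q}$ in the conclusion, whereas a plain Hölder estimate of $\int_U f\omega\,dx$ would only give $\|\omega\|_{L^q}^{t}$, which is weaker as soon as $t>1$. Throughout, interpret $\bE_\omega[f]=\frac1{|U|}\int_U f\omega\,dx$, so the normalization $\bE[\omega]=1$ reads $\int_U\omega\,dx=|U|$ and hence $\tfrac1{|U|}\omega\,dx$ is a probability measure on $U$; note also that Hölder's inequality $|U|=\int_U\omega\cdot1\,dx\le\|\omega\|_{L^q(U)}\,|U|^{1/q'}$ forces $\|\omega\|_{L^q(U)}\ge|U|^{1/q}=:c_0>0$. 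One first records that $\bE_\omega[f]$ is well-defined: under the hypotheses one has $\tfrac1{p^*}+\tfrac1q<1$ (with $p^*$ the Sobolev exponent when $p<n$, the claim being trivial when $p\ge n$), so the product of $f\in W^{1,p}(U)\hookrightarrow L^{p^*}(U)$ with $\omega\in L^q(U)$ is integrable.

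By the triangle inequality and convexity of $z\mapsto z^t$ on $[0,\infty)$,
\[
    \int_U|f-\bE_\omega[f]|^t\,dx\le 2^{t-1}\Big(\int_U|f-\bar f|^t\,dx+|U|\,\big|\bar f-\bE_\omega[f]\big|^t\Big),
\]
so it suffices to estimate the two terms separately. For the first, since $t=\tfrac{n-1}{n}p<p$ (and $t<p^*$ when $p<n$), the Poincaré–Sobolev inequality on the bounded smooth domain $U$ (cf.~\cite[Section~5.8]{Evans}) gives $\int_U|f-\bar f|^t\,dx\le C(U,p)\big(\int_U|df|^p\,dx\big)^{t/p}$, and multiplying by $1\le c_0^{-1}\|\omega\|_{L^q(U)}$ inserts the missing weight factor.

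The heart of the matter is the second term. Writing $\bar f-\bE_\omega[f]=\frac1{|U|}\int_U(\bar f-f)\,\omega\,dx$ and using $t\ge1$ — which is exactly the content of the hypothesis $p\ge\frac n{n-1}$ — Jensen's inequality for the probability measure $\tfrac1{|U|}\omega\,dx$ followed by Hölder's inequality with exponents $q',q$ yields
\[
    \big|\bar f-\bE_\omega[f]\big|^t
    \le\frac1{|U|}\int_U|f-\bar f|^t\,\omega\,dx
    \le\frac1{|U|}\,\big\||f-\bar f|^t\big\|_{L^{q'}(U)}\,\|\omega\|_{L^q(U)}
    =\frac1{|U|}\,\|f-\bar f\|_{L^{q't}(U)}^{\,t}\,\|\omega\|_{L^q(U)}.
\]
It remains to verify the Poincaré–Sobolev bound $\|f-\bar f\|_{L^{q't}(U)}\le C\|df\|_{L^p(U)}$: this is automatic when $p\ge n$, and when $p<n$ it holds provided $q't\le p^*=\frac{np}{n-p}$. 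The hypotheses are tuned exactly for this. Since $q>\frac n2$ we have $q'=\frac q{q-1}<\frac n{n-2}$, while $q't\le p^*$ is equivalent to $q'\le\frac{n^2}{(n-1)(n-p)}$; and the inequality $\frac n{n-2}\le\frac{n^2}{(n-1)(n-p)}$ reduces, on clearing denominators, to $(n-1)(n-p)\le n(n-2)$, i.e. to $p(n-1)\ge n$, which is precisely $p\ge\frac n{n-1}$. Thus $q't<p^*$, the embedding applies, and multiplying through by $|U|$ gives $|U|\,|\bar f-\bE_\omega[f]|^t\le C(U,p,q)\,\|\omega\|_{L^q(U)}\big(\int_U|df|^p\,dx\big)^{t/p}$. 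Combining the two terms completes the proof with $c=c(U,p,q)$.

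The step I expect to be the real obstacle — and the one that dictates the exponents — is obtaining the \emph{linear} dependence on $\|\omega\|_{L^q}$. A naive Hölder estimate gives $\|\omega\|_{L^q}^{t}$; the remedy is to use that $\tfrac1{|U|}\omega\,dx$ is a probability measure and to apply Jensen first (legitimate because $t\ge1$), so $\omega$ enters linearly, at the cost of having to control $f-\bar f$ in the higher Lebesgue space $L^{q't}$ rather than $L^t$ — and then the two assumptions $p\ge\frac n{n-1}$ and $q>\frac n2$ are exactly what keep $q't$ below the Sobolev exponent $p^*$. (The pointwise Riesz-potential bound for $f-\bar f$ on convex domains from~\cite[Lemma~7.16]{GilbargTrudinger} could also serve as a starting point, but the Jensen argument is shorter and makes the role of the hypotheses transparent.) The remaining points — well-definedness of $\bE_\omega[f]$, the lower bound on $\|\omega\|_{L^q}$, and the underlying Poincaré–Sobolev inequalities — are standard.
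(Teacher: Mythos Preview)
Your proof is correct, and it takes a genuinely different route from the paper's. The paper argues directly from the line--segment representation $f(y)-f(x)=\int_0^{|y-x|}df_{x+s\theta}(\theta)\,ds$ on the convex set $U$, applies Jensen twice, passes to polar coordinates, and recognises the resulting inner integral as a convolution of $\omega$ with a truncated Riesz kernel $|x|^{1-n}\chi_{|x|\le d}$; Young's inequality (this is where $q>\tfrac n2$ enters) then gives $\|\tilde\omega\|_{L^n}\le c\|\omega\|_{L^q}$, and a final H\"older with exponents $n,\tfrac n{n-1}$ closes the estimate. By contrast, you split off the unweighted mean $\bar f$ and reduce everything to the \emph{standard} Poincar\'e--Sobolev inequality $\|f-\bar f\|_{L^s}\le C\|df\|_{L^p}$ for $s\le p^*$, together with Jensen on the probability measure $\tfrac1{|U|}\omega\,dx$ followed by H\"older. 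Your argument is shorter and more modular --- it treats Poincar\'e--Sobolev as a black box and makes the arithmetic role of the hypotheses $p\ge\tfrac n{n-1}$ and $q>\tfrac n2$ completely transparent through the check $q't\le p^*$ --- and it never actually uses convexity of $U$, so it goes through verbatim on any bounded Lipschitz domain. The paper's approach, on the other hand, is self--contained (it does not invoke the Sobolev inequality at this stage) and makes explicit why the exponent $t=\tfrac{n-1}{n}p$ is natural: it is exactly the H\"older dual of $n$ applied to $|df|^p$, matching the $L^n$ bound on the convolution $\tilde\omega$.
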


\begin{proof}
    We prove the inequality for $C^1$ functions $u$. The general case follows by a density argument. Let $x \in U$ be given. Then, for any $y \in U$, $y \neq x$, setting $\theta \coloneq \frac{y - x}{|y-x|}$, we have
    \[
        f(y) - f(x)
        = \int_0^{|y-x|} \frac{d}{ds}\left(f(x + s \theta)\right) ds
        = \int_0^{|y-x|} df_{x + s \theta} (\theta) ds.
    \]
    Upon multiplying by $\omega(y)$ and integrating over $U$, we have
    \begin{align*}
        \bE_\omega[f] - f(x)
         & = \int_U \omega(y) (f(y) - f(x)) dy                                 \\
         & = \int_U \omega(y) \int_0^{|y-x|} df_{x + s \theta} (\theta) ds dy.
    \end{align*}
    As $\omega(y) dy$ is a probability measure and $t \geq 1$, we can use Jensen's inequality to get
    \begin{align*}
        |f(x) - \bE_\omega[f]|^t
         & \leq \int_U \omega(y) \left|\int_0^{|y-x|} df_{x + s \theta} (\theta) ds\right|^t dy              \\
         & \leq \int_U \omega(y) |y-x|^{t-1} \int_0^{|y-x|} \left|df_{x + s \theta} (\theta)\right|^t ds dy,
    \end{align*}
    where we used a second time Jensen's inequality to get
    \[
        \left|\frac{1}{|y-x|} \int_0^{|y-x|} df_{x + s \theta} (\theta) ds\right|^t
        \leq \frac{1}{|y-x|} \int_0^{|y-x|} \left|df_{x + s \theta} (\theta)\right|^t ds,
    \]
    as $\frac{ds}{|y-x|}$ is, once again, a probability measure on the interval
    $[0, |y-x|]$.

    We now pass to polar coordinates for $y-x$. Note that $\omega$ can be extended
    to zero outside of $U$. We also extend $df$ by zero outside of $U$ without
    changing the notation. This allows us not to care about restricting the domain
    of integration. Let also $d = \diam(U)$. We have
    \[
        |f(x) - \bE_\omega[f]|^t
        \leq \int_{r=0}^d \int_{\bS^{n-1}} r^{n+t-2} \omega(x + r \theta) \int_0^r |df_{x + s \theta}|^t ds d\theta dr,
    \]
    where $d\theta$ is the surface measure on the unit sphere $\bS^{n-1} \subset
        \bR^n$. Hence,
    \[
        |f(x) - \bE_\omega[f]|^t
        \leq \int_{0 \leq s \leq r \leq d} \int_{\bS^{n-1}} r^{n+t-2} \omega(x + r \theta) |df_{x + s \theta}|^t d\theta dr ds.
    \]
    Integrating against $x \in U$, we get
    \begin{align*}
        \int_U |f(x) - \bE_\omega[f]|^t dx
         & \leq \int_U \int_{0 \leq s \leq r \leq d} \int_{\bS^{n-1}} r^{n+t-2} \omega(x + r \theta) |df_{x + s \theta}|^t d\theta dr ds dx                      \\
         & \leq \int_U \int_{\bS^{n-1}} \int_{s=0}^d \left(\int_{s \leq r \leq d} r^{n+t-2} \omega(x + r \theta) dr \right) |df_{x + s \theta}|^t d\theta ds dx  \\
         & \leq d^{n+t-2} \int_U \int_{\bS^{n-1}} \int_{s=0}^d \left(\int_{s \leq r \leq d} \omega(x + r \theta) dr \right) |df_{x + s \theta}|^t d\theta ds dx.
    \end{align*}
    We change the order of integration and perform the change of variable $z = x + s \theta$ to get:
    \begin{align}
        \int_U |f(x) - \bE_\omega[f]|^t dx
         & \leq d^{n+t-2} \int_{\bS^{n-1}} \int_{s=0}^d \int_{x \in U} \left(\int_{s \leq r \leq d} \omega(x + r \theta) dr \right) |df_{x + s \theta}|^t dx d\theta ds\nonumber \\
         & = d^{n+t-2} \int_{\bS^{n-1}} \int_{s=0}^d \int_{z \in U} \left(\int_{s \leq r \leq d} \omega(z + (r-s) \theta) dr \right) |df_{z}|^t dz d\theta ds\nonumber           \\
         & = d^{n+t-2} \int_{z \in U} \left(\int_{\bS^{n-1}} \int_{s=0}^d \int_{s \leq r \leq d} \omega(z + (r-s) \theta) dr ds d\theta\right) |df_{z}|^t dz\nonumber            \\
         & \leq d^{n+t-1} \int_{z \in U} \left(\int_{\bS^{n-1}} \int_{\rho=0}^d \omega(z + \rho \theta) d\rho d\theta\right) |df_{z}|^t dz,\label{eqPoincare3}
    \end{align}
    where we also performed the change of variable $\rho = r-s$ to obtain the last line. Now remark that the inner integral can be rewritten as follows:
    \begin{align*}
        \omegatil(z)
         & = \int_{\bS^{n-1}} \int_{t=0}^d \omega(z + \rho \theta) d\rho d\theta                        \\
         & = \int_{\bS^{n-1}} \int_{t=0}^d \omega(z + \rho \theta) \rho^{-n-1} \rho^{n-1} d\rho d\theta \\
         & = \int_{\bR^n} \omega(z - x) \frac{\chi_{|x| \leq d}}{|x|^{n-1}} dx,
    \end{align*}
    where we have set $x = \rho \theta$. Note that the function $x \mapsto \frac{\chi_{|x| \leq d}}{|x|^{n-1}}$ belongs to $L^r(\bR^n, \bR)$ for any $r < \frac{n}{n-1}$\footnote{This function is actually a truncated Riesz potential for which we could use the Hardy-Littlewood-Sobolev inequality~\cite[Section 7.8]{GilbargTrudinger}. This is not needed here.} so we use Young's inequality together with the fact that $q > \frac{n}{2}$ to get
    \[
        \|\omegatil\|_{L^n} \leq c \|\omega\|_{L^q}
    \]
    for some constant $c = c(n, q, d)$. We finally conclude
    from~\eqref{eqPoincare3} that
    \[
        \int_U |f(x) - \bE_\omega[f]|^t dx \leq c d^{n+p-2} d^{n+t-1} \int_{z \in U} \omegatil(z) |df_{z}|^t dz \leq \|\omegatil\|_{L^n} \left(\int_U |df|^p dx\right)^{\frac{n-1}{n}}.
    \]
\end{proof}

The next step is to extend the previous result to a broader range for $t$ by
means of the Sobolev inequality:
\begin{lemma}\label{lmPoincare2}
    Let $U$ be an open non-empty bounded convex subset of $\bR^n$ with smooth boundary. Let $p, q \in (1, \infty)$ be such that $p \geq \frac{n}{n-1}$ and $q > \frac{n}{2}$. Let $r \in [1, \infty]$ be such that
    \[
        \frac{1}{r} \geq \frac{1}{p} - \frac{1}{n} \quad\text{if } p < n.
    \]
    There exists a constant $c' = c'(U, p, q, r)$ such that, for any $\omega \in
        L^q(U, \bR)$, $\omega \geq 0$ a.e., $\bE[\omega] = 1$ and any function $f \in
        W^{1, p}(U, \bR)$,
    \[
        \|f - \bE_\omega[f]\|_{L^r} \leq c' \|\omega\|_{L^q}^{\frac{n}{(n-1)p}} \|df\|_{L^p}.
    \]
\end{lemma}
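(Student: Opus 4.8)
The plan is to derive Lemma~\ref{lmPoincare2} from Lemma~\ref{lmPoincare1} together with the classical Poincar\'e--Sobolev inequality on the smooth bounded convex domain $U$. Set $g \coloneq f - \bE_\omega[f]$. Since $g$ differs from $f$ by an additive constant we have $dg = df$, $g \in W^{1,p}(U,\bR)$, and $\bE_\omega[g] = \bE_\omega[f] - \bE_\omega[f] = 0$. Applying Lemma~\ref{lmPoincare1} to $g$ with the same weight $\omega$ and extracting $t$-th roots gives
\[
    \|g\|_{L^t(U)} \le c^{1/t}\,\|\omega\|_{L^q}^{1/t}\,\|df\|_{L^p(U)},
    \qquad t = \tfrac{n-1}{n}\,p, \quad \tfrac{1}{t} = \tfrac{n}{(n-1)\,p},
\]
so that the exponent of $\|\omega\|_{L^q}$ is already the one required by the conclusion. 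It remains only to pass from the $L^t$-norm of $g$ to its $L^r$-norm.

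First I would estimate the (unweighted) mean of $g$ in terms of its $L^t$-norm: by H\"older's inequality on the bounded set $U$,
\[
    |\bE[g]| \le \frac{1}{|U|}\,\|g\|_{L^1(U)} \le |U|^{-1/t}\,\|g\|_{L^t(U)},
    \qquad\text{hence}\qquad
    \|\bE[g]\|_{L^r(U)} \le |U|^{\frac{1}{r}-\frac{1}{t}}\,\|g\|_{L^t(U)}.
\]
Next I would invoke the classical Poincar\'e--Sobolev inequality on $U$: for every $r$ admissible under the hypotheses (that is, $r \le \tfrac{np}{n-p}$ when $p<n$, every finite $r$ when $p=n$, and $r=\infty$ via Morrey's inequality when $p>n$) there is a constant $C_1 = C_1(U,p,r)$ with
\[
    \|g - \bE[g]\|_{L^r(U)} \le C_1\,\|dg\|_{L^p(U)} = C_1\,\|df\|_{L^p(U)}.
\]
Combining these with the triangle inequality $\|g\|_{L^r} \le \|g - \bE[g]\|_{L^r} + \|\bE[g]\|_{L^r}$ and with the bound on $\|g\|_{L^t}$ above, I obtain
\[
    \|g\|_{L^r(U)} \le C_1\,\|df\|_{L^p(U)} + |U|^{\frac{1}{r}-\frac{1}{t}}\,c^{1/t}\,\|\omega\|_{L^q}^{1/t}\,\|df\|_{L^p(U)}.
\]

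It then remains to absorb the first term on the right into the second. Here the normalization of $\omega$ is used a second time: since $\omega\ge 0$ and $\bE[\omega]=1$ pins down $\|\omega\|_{L^1(U)}$ to a fixed positive value, H\"older's inequality yields a lower bound $\|\omega\|_{L^q(U)} \ge c_2(U,q) > 0$, so that $C_1 \le C_1\,c_2^{-1/t}\,\|\omega\|_{L^q}^{1/t}$ and
\[
    \|g\|_{L^r(U)} \le \bigl(C_1\,c_2^{-1/t} + |U|^{\frac{1}{r}-\frac{1}{t}}\,c^{1/t}\bigr)\,\|\omega\|_{L^q(U)}^{1/t}\,\|df\|_{L^p(U)}.
\]
Recalling $g = f - \bE_\omega[f]$ and $\tfrac{1}{t} = \tfrac{n}{(n-1)p}$, this is exactly the asserted inequality with $c' = C_1 c_2^{-1/t} + |U|^{1/r - 1/t}\,c^{1/t}$. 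I do not expect a genuine obstacle: the argument is a soft combination of Lemma~\ref{lmPoincare1} with standard Sobolev embeddings on a regular domain; the one point worth a moment's attention is the observation that the normalization of $\omega$ forces the positive lower bound on $\|\omega\|_{L^q}$ that powers the final absorption. (The well-definedness of $\bE_\omega[f]$ for $f\in W^{1,p}(U,\bR)$ and $\omega\in L^q(U)$ with $q>\tfrac{n}{2}$ rests on the same H\"older--Sobolev pairing already used in Lemma~\ref{lmPoincare1} and needs no new input.)
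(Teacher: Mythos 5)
Your argument is correct, and it takes a genuinely different route from the paper. Both proofs start from Lemma~\ref{lmPoincare1} in the form $\|f-\bE_\omega[f]\|_{L^t}\le c^{1/t}\|\omega\|_{L^q}^{1/t}\|df\|_{L^p}$ with $t=\tfrac{n-1}{n}p$, and both finish with the same homogenization trick $1=\|\omega\|_{L^1}\le \vol(U)^{1-1/q}\|\omega\|_{L^q}$ to give the $\omega$-free term the factor $\|\omega\|_{L^q}^{n/((n-1)p)}$. The difference is in the middle: the paper only invokes the Sobolev embedding $\|\ftil\|_{L^{p'}}\le s(\|d\ftil\|_{L^p}+\|\ftil\|_{L^p})$ and then recovers the Poincaré-type control by interpolating $\|\ftil\|_{L^p}$ between $L^t$ and $L^{p'}$ and absorbing the $L^{p'}$ term via the $\epsilon$-Young inequality, whereas you decompose $g=(g-\bE[g])+\bE[g]$, control the oscillation $g-\bE[g]$ by the classical mean-zero Poincaré--Sobolev inequality on the convex smooth domain $U$, and control the constant $\bE[g]$ by the $L^t$ bound from Lemma~\ref{lmPoincare1}. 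Your route is shorter and avoids the interpolation/absorption machinery, at the price of importing the sharp Poincaré--Sobolev inequality (Sobolev embedding combined with the standard Poincaré inequality) as a black box; the paper's route is self-contained given only the embedding $W^{1,p}\hookrightarrow L^{p'}$. One shared caveat: since the hypothesis restricts $r$ only when $p<n$, the statement formally allows $p=n$, $r=\infty$, a case neither your argument (you restrict to finite $r$ when $p=n$) nor the paper's (which needs $r\le p'<\infty$ when $p=n$) actually covers -- and indeed $W^{1,n}\not\hookrightarrow L^\infty$, so this is a defect of the statement rather than of your proof.
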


\begin{proof}
    Let $t = \frac{n-1}{n} p$ be as in Lemma~\ref{lmPoincare1}. The conclusion of this lemma can be written as follows:
    \begin{equation}\label{eqPoincare4}
        \|f - \bE_\omega[f]\|_{L^t} \leq (c \|\omega\|_{L^q})^{\frac{n}{(n-1)p}} \|df\|_{L^p}.
    \end{equation}
    As $U$ has finite volume, the conclusion of the lemma is immediate if $r \leq t$ as there is a continuous inclusion $L^t(U, \bR) \hookrightarrow L^r(U, \bR)$. If, instead, $r > t$, we let $p'$ be defined as follows:
    \[
        p' =
        \left\lbrace
        \begin{aligned}
            \frac{np}{n-p}       & \quad\text{if } p < n, \\
            \infty               & \quad\text{if } p > n, \\
            \text{arbitrary} > p & \quad\text{if } p = n.
        \end{aligned}
        \right.
    \]
    Let $\ftil = f - \bE_\omega[f]$. The Sobolev embedding theorem implies that
    there exists a constant $s > 0$ such that
    \begin{equation}\label{eqPoincare5}
        \|\ftil\|_{L^{p'}} \leq s \left(\|d\ftil\|_{L^p} + \|\ftil\|_{L^p}\right).
    \end{equation}
    As $t < p < p'$, we have
    \[
        \|\ftil\|_{L^p} \leq \|\ftil\|_{L^t}^\theta \|\ftil\|_{L^{p'}}^{1-\theta}\quad\text{with } \theta \in (0, 1) \text{ such that } \frac{1}{p} = \frac{\theta}{t} + \frac{1-\theta}{p'}.
    \]
    By the $\epsilon$-Young inequality~\cite[Equation $(7.6)$]{GilbargTrudinger},
    we deduce that, for any $\epsilon > 0$,
    \[
        \|\ftil\|_{L^p} \leq \theta \epsilon^{-\frac{1-\theta}{\theta}} \|\ftil\|_{L^t} + (1-\theta) \epsilon \|\ftil\|_{L^{p'}}.
    \]
    From~\eqref{eqPoincare4}, we deduce that
    \[
        \|\ftil\|_{L^p} \leq \theta \epsilon^{-\frac{1-\theta}{\theta}} (c \|\omega\|_{L^q})^{\frac{n}{(n-1)p}} \|df\|_{L^p} + (1-\theta) \epsilon \|\ftil\|_{L^{p'}}.
    \]
    Hence,~\eqref{eqPoincare5} implies
    \[
        \|\ftil\|_{L^{p'}} \leq s \left(\|d\ftil\|_{L^p} + \theta \epsilon^{-\frac{1-\theta}{\theta}} (c \|\omega\|_{L^q})^{\frac{n}{(n-1)p}} \|df\|_{L^p} + (1-\theta) \epsilon \|\ftil\|_{L^{p'}}\right),
    \]
    which we rewite as follows:
    \[
        \left(\frac{1}{s} - (1-\theta) \epsilon\right) \|\ftil\|_{L^{p'}} \leq \left(1 + \theta \epsilon^{-\frac{1-\theta}{\theta}} (c \|\omega\|_{L^q})^{\frac{n}{(n-1)p}}\right) \|d\ftil\|_{L^p}.
    \]
    Choosing $\epsilon$ such that $(1-\theta) \epsilon = \frac{1}{2s}$, we conclude
    that
    \[
        \|\ftil\|_{L^{p'}} \leq 2s \left(1 + \theta \epsilon^{-\frac{1-\theta}{\theta}} (c \|\omega\|_{L^q})^{\frac{n}{(n-1)p}}\right) \|d\ftil\|_{L^p}.
    \]
    As $1 = \bE[\omega] = \|\omega\|_{L^1} \leq \vol(U)^{1 - \frac{1}{q}}
        \|\omega\|_{L^q}$, we can replace the constant $1$ in the previous inequality
    by
    \[
        1 = 1^{\frac{n}{(n-1)p}} \leq \vol(U)^{\left(1 - \frac{1}{q}\right)\frac{n}{(n-1)p}} \|\omega\|_{L^q}^{\frac{n}{(n-1)p}}.
    \]
    Thus, we conclude that there is a constant $c'$ depending only on $U$ and $p$
    but not on $f$ such that
    \[
        \|\ftil\|_{L^{p'}} \leq c' \|\omega\|_{L^q}^{\frac{n}{(n-1)p}} \|d\ftil\|_{L^p},
    \]
    i.e.\ such that
    \[
        \|f - \bE_\omega[f]\|_{L^{p'}} \leq c' \|\omega\|_{L^q}^{\frac{n}{(n-1)p}} \|df\|_{L^p}.
    \]
    Using once again that, if $r \leq p'$, we have a continuous embedding
    $L^{p'}(U, \bR) \hookrightarrow L^r(U, \bR)$, we obtain the conclusion of the
    lemma.
\end{proof}

\section{Local diffeomorphisms onto compact manifolds}\label{secDiffeo}

The strategy for extending the Poincaré inequality from convex subsets of
$\bR^n$ to compact manifolds relies on constructing a surjective local
diffeomorphism from an open ball $B \subset \bR^n$ onto a given compact
Riemannian manifold $(M,g)$. This is the content of the following proposition:

\begin{proposition}\label{propDiffeo}
    Let $(M^n, g)$ be a connected compact Riemannian manifold. There exists a surjective local diffeomorphism $\Psi: B \to M$, where $B$ is the unit ball in $\bR^n$, and a constant $K > 0$ such that, for any $x \in M$, the fiber $\Psi^{-1}(x)$ has cardinal at most $K$.
\end{proposition}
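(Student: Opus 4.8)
The plan is to construct a surjective local diffeomorphism onto $M$ defined on an open subset $\Omega\subset\bR^n$ that is diffeomorphic to $\bR^n$, hence to the unit ball $B$; precomposing with a diffeomorphism $B\to\Omega$ then gives a map $\Psi$ with the required properties. The case $n=1$ is elementary: a connected compact $1$-manifold is $\bS^1$, and for any $\alpha\in(\pi,2\pi)$ the map $(-1,1)\ni t\mapsto e^{i\alpha t}\in\bS^1$ is a surjective local diffeomorphism whose fibres have cardinal at most $2$. So assume $n\geq 2$. By compactness, fix a finite atlas $(V_i,\chi_i)_{1\leq i\leq N}$ with each $\chi_i\colon V_i\to\bR^n$ a diffeomorphism, chosen so that the smaller sets $U_i\coloneq\chi_i^{-1}(B(0,1))$ still cover $M$; set $V_i^{\ast}\coloneq\chi_i^{-1}(B(0,2))$, so that $\overline{U_i}\subset V_i^{\ast}\subset\overline{V_i^{\ast}}\subset V_i$ and $\bigcup_i U_i=M$. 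Let $G$ be the graph on $\{1,\dots,N\}$ with an edge $\{i,j\}$ whenever $U_i\cap U_j\neq\emptyset$; since $M$ is connected, $G$ is connected, and we fix a spanning tree $T\subset G$.

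Next I would build the domain $\Omega$. In $\bR^n$, take pairwise disjoint open round balls $b_1,\dots,b_N$ together with, for each edge $\{i,j\}\in T$, a thin open tube $\tau_{ij}$, diffeomorphic to $B^{n-1}\times(0,1)$, joining $b_i$ to $b_j$; concretely, the whole configuration is a thickening of an embedding of the tree $T$ as a $1$-complex in $\bR^n$ (a tree is planar, so this is possible already for $n=2$), arranged so that two tubes meet only inside a common ball and a tube meets no ball other than the two it joins. Put $\Omega\coloneq\bigcup_i b_i\cup\bigcup_{\{i,j\}\in T}\tau_{ij}$. Then $\Omega$ is a regular neighbourhood of an embedded tree, and since a tree is collapsible, $\Omega$ is diffeomorphic to $\bR^n$. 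Equivalently, after smoothing corners, $\overline{\Omega}$ is an iterated boundary connected sum of closed $n$-balls along $T$; since $D^n\natural D^n\cong D^n$ — as one sees by writing $D^n$ as the union of its two halves $\{\pm x_n\geq 0\}$ glued along an $(n-1)$-disc — an induction peeling off the leaves of $T$ gives $\overline{\Omega}\cong D^n$, whence $\Omega\cong B$.

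On each ball $b_i$ I would let $\Psi$ be a diffeomorphism onto $V_i^{\ast}$; then $\bigcup_i\Psi(b_i)\supset\bigcup_i U_i=M$, so $\Psi$ will be surjective regardless of how it is defined on the tubes. The portion of $\tau_{ij}$ lying inside $b_i$ (resp.\ $b_j$) is already mapped by $\Psi|_{b_i}$ (resp.\ $\Psi|_{b_j}$) onto a thin tube around a short arc in $V_i^{\ast}$ (resp.\ $V_j^{\ast}$); using that $M$ is connected, choose a smooth arc $\gamma_{ij}$ in $M$ which begins along the first of these arcs, ends along the second, and joins them through $M$ in between, and — after a generic perturbation of the middle part — has only finitely many transverse self-intersections. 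By the tubular neighbourhood theorem, prescribing the parametrisation near the two ends, extend $\Psi$ over $\tau_{ij}$ to an immersion onto a thin tubular neighbourhood of $\gamma_{ij}$ agreeing with $\Psi|_{b_i}$ and $\Psi|_{b_j}$ on the overlaps. The resulting map $\Psi\colon\Omega\to M$ is then smooth, surjective, and a local diffeomorphism — a diffeomorphism on each $b_i$ and an immersion between equidimensional manifolds on each $\tau_{ij}$. For the fibre bound, cover $\Omega$ by the sets $b_i\setminus\bigcup_{\{i,j\}\in T}\tau_{ij}$, on which $\Psi$ is injective, and the tubes $\tau_{ij}$, on which $\Psi$ is at most $(1+c_{ij})$-to-one, where $c_{ij}$ is the number of self-intersections of $\gamma_{ij}$; hence $\#\Psi^{-1}(x)\leq N+\sum_{\{i,j\}\in T}(1+c_{ij})$ for every $x\in M$, which is the desired constant $K$. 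Composing with a diffeomorphism $B\to\Omega$ finishes the construction.

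The step I expect to be the real obstacle is the diffeo-topological assertion that the ``tree of balls'' $\Omega$ is diffeomorphic to $B$: the ingredients (planarity of a tree, $D^n\natural D^n\cong D^n$, the fact that a regular neighbourhood of a collapsible polyhedron is a ball, and corner smoothing) are all classical, but assembling them carefully is the one genuinely delicate point. Everything else — extracting the cover, routing the tubes, connecting chart regions through $M$, and extending tubular neighbourhoods with prescribed ends — is routine and uses only the connectedness of $M$.
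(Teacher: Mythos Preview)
Your overall strategy—cover $M$ by chart balls, take a spanning tree $T$ of the nerve, and show by peeling off leaves that the resulting ``tree of balls'' is diffeomorphic to $B$—is precisely the paper's. The implementations diverge, however. The paper covers $M$ by geodesically \emph{convex} balls $B_i$ and forms $\widetilde{M}$ by gluing copies of $B$ along the \emph{actual} overlaps $\phi_i^{-1}(B_i\cap B_j)$ for edges $(i,j)\in T$; the surjective local diffeomorphism $\widetilde{M}\to M$ is then simply the tautological projection $[(i,x)]\mapsto\phi_i(x)$, so surjectivity and the fibre bound $\#\Psi^{-1}(y)\le K$ (the number of balls) come for free. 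The induction step uses only that $B_i\cup B_j$ is star-shaped about any point of $B_i\cap B_j$ (both being geodesically convex), hence diffeomorphic to a ball. Your version instead places \emph{disjoint} balls in $\bR^n$ joined by external tubes, which forces you to manufacture $\Psi$ on each tube by hand and to bound multiplicities via self-intersection counts of the arcs $\gamma_{ij}$—work the paper's intrinsic gluing avoids entirely.

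One caveat on the step you call routine. Extending $\Psi$ over $\tau_{ij}\cong B^{n-1}\times(0,1)$ so as to agree with $\Psi|_{b_i}$ and $\Psi|_{b_j}$ on the two collars amounts to extending a framing of the normal bundle of $\gamma_{ij}$ with prescribed values near both ends; the obstruction lives in $\pi_0\bigl(GL(n-1,\bR)\bigr)\cong\mathbb{Z}/2$ and does not vanish automatically for a given $\gamma_{ij}$. It can always be killed—choose every $\Psi|_{b_i}$ orientation-preserving if $M$ is orientable, or concatenate $\gamma_{ij}$ with an orientation-reversing loop if not—but this uses more than ``only the connectedness of $M$''. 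So your diagnosis of where the difficulty lies is slightly off: the assertion $\Omega\cong B$ is indeed classical, while the tube extension you dismiss as routine hides the one genuine subtlety your write-up does not address.
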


Although the existence of such a diffeomorphism does not depend on the choice
of a Riemannian metric on $M$, we include the metric in the statement since it
is relevant to our context, and the construction we use explicitly requires
one.

\begin{proof}
    The first step is inspired by~\cite[Theorem 5.1]{BottTu}. Let $\conv_g(M) > 0$ denote the convexity radius of $(M, g)$, i.e., the largest $r > 0$ such that, for any $x \in M$, the distance function $d_g(x, \cdot)$ is convex on the geodesic ball $B_g(x, r)$ (see e.g.~\cite[Chapter 6]{Petersen}).

    Since $M$ is compact, it can be covered by finitely many such convex balls. Let
    $\{x_i\}_{i = 1}^K \subset M$ be a finite set of points such that the balls
    $B_i \coloneq B_g(x_i, \conv_g(M))$ cover $M$. Define the incidence graph $G$
    of this covering: its vertices are $\{1, \ldots, K\}$, with an edge between $i$
    and $j$ if and only if $B_i \cap B_j \neq \emptyset$. Since $M$ is connected,
    $G$ is connected, and therefore admits a spanning tree $T \subset G$
    (see~\cite[Section 1.5]{Diestel}).

    Let $B \subset \bR^n$ denote the open unit ball, and let $\phi_i: B \to B_i
        \subset M$ be a diffeomorphism, for example given by a rescaled exponential map
    centered at $x_i$. Then $M$ can be reconstructed by gluing together the balls
    $B$ via the identifications:
    \[
        M \cong \left( \bigsqcup_{i = 1}^K \{i\} \times B \right) \big/ \!\sim,
    \]
    where the equivalence relation $\sim$ is defined by:
    \[
        (i, x) \sim (j, y) \quad \Leftrightarrow \quad
        \begin{cases}
            i = j                   & \text{ and } x = y,                 \\
                                    & \text{or}                           \\
            i \neq j,\ (i, j) \in G & \text{ and } \phi_i(x) = \phi_j(y).
        \end{cases}
    \]

    Note that the condition $(i, j) \in G$ is redundant, since $\phi_i(x) =
        \phi_j(y)$ implies $B_i \cap B_j \neq \emptyset$. However, we may restrict the
    identifications to edges $(i, j) \in T$, the chosen spanning tree. This yields
    a smooth open manifold $\widetilde{M}$ defined by:
    \[
        \widetilde{M} \coloneq \left( \bigsqcup_{i = 1}^K \{i\} \times B \right) \big/ \!\sim_T,
    \]
    where $(i, x) \sim_T (j, y)$ if and only if:
    \[
        i = j \text{ and } x = y, \quad\text{or} \quad (i, j) \in T \text{ and } \phi_i(x) = \phi_j(y).
    \]

    Since $T$ is a tree, the space $\widetilde{M}$ is a smooth, connected, open
    manifold of dimension $n$, and the natural projection $\pi : \widetilde{M} \to
        M$ is a surjective local diffeomorphism.

    We now show that $\widetilde{M}$ is diffeomorphic to the unit ball $B \subset
        \bR^n$, by induction on the number $K$ of vertices in $T$.

    The base case $K = 1$ is trivial, as $\widetilde{M} = B$. Assume the statement
    holds for a tree with $K-1$ vertices, and let $T$ have $K$ vertices. Choose a
    leaf $i$ of $T$, and let $j$ be its unique neighbor. Let $\widetilde{N}$ be the
    open manifold obtained by removing the chart $\{i\} \times B$ from the disjoint
    union and performing the same identifications as in $\widetilde{M}$, restricted
    to $T' = T \setminus \{(i,j)\}$. By induction, $\widetilde{N} \cong B$.

    We now show that $\widetilde{M} \cong \widetilde{N}$. Observe that $B_i \cup
        B_j \subset M$ is open, contains a convex subset $B_i \cap B_j$, and is
    therefore star-shaped with respect to any point $x \in B_i \cap B_j$. Hence,
    $B_i \cup B_j$ is diffeomorphic to $B_j$. We may choose a diffeomorphism $\psi
        : B_i \cup B_j \to B_j$ such that $\psi$ is the identity on the overlaps $B_j
        \cap B_k$ for any neighbor $k$ of $j$ in $T'$. This diffeomorphism can be
    extended to all of $\widetilde{M}$ by acting as the identity elsewhere, and
    thus defines a diffeomorphism $\widetilde{M} \cong \widetilde{N}$.

    By induction, we obtain a global diffeomorphism $\Phi : \widetilde{M} \to B$.
    Composing with the local diffeomorphism $\pi: \widetilde{M} \to M$, we obtain
    the required surjective local diffeomorphism:
    \[
        \Psi \coloneq \pi \circ \Phi^{-1}: B \twoheadrightarrow M.
    \]
\end{proof}

\section{Proof of the main result}\label{secProof}

We can now prove the main result of this note:

\begin{proof}[Proof of Theorem~\ref{thmPoincare}]
    From Proposition~\ref{propDiffeo}, there exists a surjective local diffeomorphism $\Psi : B \to M$, where $B \subset \bR^n$ is the open unit ball. For each $\epsilon \in (0, 1)$, let $B_{1 - \epsilon} \subset B$ denote the open ball of radius $1 - \epsilon$ centered at the origin. Then
    \[
        M = \bigcup_{\epsilon \in (0, 1)} \Psi(B_{1 - \epsilon}),
    \]
    and since $M$ is compact and the sets $\Psi(B_{1-\epsilon})$ are decreasing
    with respect to $\epsilon$, there exists $\epsilon_0 \in (0,1)$ such that $M =
        \Psi(B_{1 - \epsilon_0})$.

    As $B_{1 - \epsilon_0}$ is relatively compact in $B$, the pullback metric
    $\widetilde{g} = \Psi^* g$ is smooth and uniformly equivalent to the Euclidean
    metric $\delta$ on $B_{1 - \epsilon_0}$: there exists a constant $C_0 \geq 1$
    such that
    \begin{equation}\label{eqEquivalentMetrics}
        C_0^{-1} \delta \leq \widetilde{g} \leq C_0 \delta.
    \end{equation}

    We may rescale the domain by the dilation $x \mapsto \frac{x}{1 - \epsilon_0}$
    to work on the unit ball $B$, preserving the
    equivalence~\eqref{eqEquivalentMetrics} with a possibly larger constant $C_0$.

    Let $f \in W^{1,p}(M)$ and $\omega \in L^q(M)$ be non-negative with
    $\mathbb{E}[\omega] = 1$. We define $\tilde{f} := f \circ \Psi$, and wish to
    construct a weight $\tilde{\omega}$ on $B$ such that
    $\mathbb{E}[\tilde{\omega}] = 1$. For this, we use the coarea formula (see
    e.g.~\cite[Chapter 5.2]{KrantzParks}):
    \begin{equation}\label{eqCoarea}
        \int_B h(x) \, dx = \int_M \left(\sum_{x \in \Psi^{-1}(y)} \frac{h(x)}{|\det d\Psi_x|} \right) d\mu^g(y),
    \end{equation}
    which holds for any integrable function $h : B \to \bR$.

    Define
    \[
        \tilde{\omega}(x) := \frac{1}{\# \Psi^{-1}(\Psi(x))} |\det d\Psi_x| \cdot \omega(\Psi(x)).
    \]
    Then $\tilde{\omega}$ is measurable, non-negative, and satisfies:
    \[
        \int_B \tilde{\omega}(x) \, dx = \int_M \omega(y) \, d\mu^g(y) = 1.
    \]

    We now estimate the $L^q$-norm of $\tilde{\omega}$. Using
    again~\eqref{eqCoarea}, we compute:
    \begin{align*}
        \int_B \tilde{\omega}(x)^q \, dx
         & = \int_M \left( \sum_{x \in \Psi^{-1}(y)} \frac{\tilde{\omega}(x)^q}{|\det d\Psi_x|} \right) d\mu^g(y)                           \\
         & = \int_M \left( \sum_{x \in \Psi^{-1}(y)} \frac{|\det d\Psi_x|^{q - 1}}{(\# \Psi^{-1}(y))^q} \cdot \omega(y)^q \right) d\mu^g(y) \\
         & \leq \sup_{x \in B} |\det d\Psi_x|^{q - 1} \cdot \int_M \omega(y)^q \, d\mu^g(y).
    \end{align*}
    Therefore,
    \begin{equation}\label{eqEquivalenceLq}
        \|\tilde{\omega}\|_{L^q(B)} \leq C_2 \|\omega\|_{L^q(M)},
    \end{equation}
    where $C_1 = \left( \sup_B |\det d\Psi_x|^{q - 1} \right)^{1/q}$.

    We now compute the $L^p$-norm of the differential $d\tilde{f}$. Since
    $\tilde{f} = f \circ \Psi$, and using the equivalence of
    metrics~\eqref{eqEquivalentMetrics}, we obtain:
    \[
        \| d\tilde{f} \|_{L^p(B)}^p \leq C_3 \| df \|_{L^p(M)}^p,
    \]
    for some constant $C_3 = C_3(\Psi, g, p)$.

    We now apply Lemma~\ref{lmPoincare2} to $\tilde{f}$ and $\tilde{\omega}$, which
    yields:
    \[
        \left\| \tilde{f} - \mathbb{E}_{\tilde{\omega}}[\tilde{f}] \right\|_{L^t(B)} \leq C_4 \|\tilde{\omega}\|_{L^q(B)}^{\frac{n}{(n-1)p}} \| d\tilde{f} \|_{L^p(B)},
    \]
    for some constant $C_4$ depending only on $n, p, q, t$.

    From the construction of $\tilde{\omega}$, a direct application
    of~\eqref{eqCoarea} shows that
    \[
        \mathbb{E}_{\tilde{\omega}}[\tilde{f}] = \mathbb{E}_{\omega}[f].
    \]

    Combining the above inequalities, we obtain:
    \[
        \left\| f - \mathbb{E}_{\omega}[f] \right\|_{L^t(M)} \leq C_5 \left\| \tilde{f} - \mathbb{E}_{\tilde{\omega}}[\tilde{f}] \right\|_{L^t(B)} \leq C \|\omega\|_{L^q(M)}^{\frac{n}{(n-1)p}} \| df \|_{L^p(M)},
    \]
    where $C = C(n, p, q, t, g)$, which completes the proof.
\end{proof}

\providecommand{\bysame}{\leavevmode\hbox to3em{\hrulefill}\thinspace}
\providecommand{\MR}{\relax\ifhmode\unskip\space\fi MR }
\providecommand{\MRhref}[2]{%
    \href{http://www.ams.org/mathscinet-getitem?mr=#1}{#2}
}
\providecommand{\href}[2]{#2}


\end{document}